\numberwithin{equation}{section}
\newcommand\supp{\mathrm{supp}}
\newtheorem{Theorem}{Theorem}[section]
\newtheorem{Definition}[Theorem]{Definition}
\newtheorem{Proposition}[Theorem]{Proposition}
\newcommand\restr[2]{{
		\left.\kern-\nulldelimiterspace 
		#1 
		\vphantom{\big|} 
		\right|_{#2} 
}}
\begin{document}

\bibliographystyle{plain}
\pagenumbering{arabic}

\title[SGWE for rough data]{Low regularity solutions to the stochastic geometric wave equation driven by a fractional Brownian sheet}
\author{Zdzis{\l}aw Brze\'zniak and Nimit Rana}
\address{Department of Mathematics \\
The University of York \\
Heslington, York,  UK} \email{zdzislaw.brzezniak@york.ac.uk}
\address{Department of Mathematics \\
	The University of York \\
	Heslington, York,  UK} \email{nr734@york.ac.uk}
\today

\addtocounter{footnote}{-1} \vskip 1 true cm

\begin{abstract}
  We announce a result on the existence of a unique local solution to a stochastic geometric wave equation on the one dimensional Minkowski space $\mathbb{R}^{1+1}$ with values in  an arbitrary compact Riemannian manifold. We consider a rough initial data in the sense that its regularity is lower than the energy critical.
\end{abstract}

\maketitle

\section{Introduction} \label{sec:Intro}
Recently, the existence and the uniqueness of a global solution, in the strong and weak sense, for the stochastic geometric wave equations (SGWEs) on the Minkowski space $\mathbb{R}^{1+m}$, $m \geq 1$, with  the target manifold $(N,g)$ being  a suitable $n$-dimensional Riemannian manifold,  e.g. a sphere, has been established under  various sets of assumptions by the first named author and M. Ondrej\'at, see  \cite{BO2007,BO2011,BO2013} for details. To the best of  our knowledge, the most general result in the case  $m=1$,  is a   construction of  a global $H_{loc}^1(N) \times L_{loc}^2 (TN)$-valued weakly continuous solution of SGWE, where $TN$ denotes the tangent bundle of $N$,  see \cite{BO2011}.

The purpose of this note is to present a method by which we can prove the existence of a unique local solution to SGWE with  $m=1$ in the case of the initial data belonging to $H_{loc}^s(N) \times H_{loc}^{s-1}(TN)$ for $s \in \left( \frac{3}{4},1\right)$. In particular, we generalize the corresponding deterministic theory result of \cite{KT1998} to the stochastic setting, as well as the results of \cite{BO2007}-\cite{BO2013} to the wave maps equation with low regularity initial data (i.e. $s<1$) and fractional (both in time and space) Gaussian noise.

A more detailed account of this work and the global theory, with complete proofs, will be presented  in forthcoming  papers.

\section{Problem formulation}
We are interested in  solutions  having continuous paths and hence,  motivated by \cite{KT1998} and \cite{QT2007}, we find that it is suitable to formulate the Cauchy problem for the SGWE  using  local coordinates on the target manifold $N$. To be precise, given a  sufficiently smooth function $\sigma$  from $\mathbb{R}^n$ to $\mathbb{R}^n$, for the wave map $z: \mathbb{R}^{1+1} \to N$ composed with a given local chart $\phi$ of $N$ we consider the following Cauchy problem
\begin{align}\label{introSGWE}
& \Box u = N_0(u) + \sigma(u) \dot{\xi} , \nonumber\\
& u(0,x)  = u_0(x), \quad \textrm{ and } \quad \partial_{t} u(0,x) = u_1(x),
\end{align}
where $\phi \circ z:= u: \mathbb{R}^{1+1} \to \mathbb{R}^n$; $\Box := \partial_{t}^2 - \Delta_x$;
$$ \partial_0 = \partial_t, \quad \partial^0 = -\partial_t, \quad \partial_1 = \partial^1 = \partial_x ;$$
 $N_0(u) := - \sum_{a,b=1}^{n} \sum_{\mu=0}^{1}\Gamma_{ab}^k(u) \left(  \partial_{\mu} u^a \partial^{\mu} u^b \right)$ with  $\Gamma_{ab}^k$ denoting  the Christoffel symbols on $N$ in the chosen  local  coordinate system  and  $\xi$ is a suitable random field. The necessary assumptions will be  given later in a precise manner.

An efficient way to simplify the computations  of the required a'priori estimates for \eqref{introSGWE} is to switch the coordinate-axis of $(t,x)$-variables to the null coordinates, see for instance \cite{KT1998,MNT2010}, and respectively \cite{Walsh1986B}, for the deterministic and the stochastic problem. Our approach is in line with these references.  By performing the following transformation, which can be made rigorous for sufficiently regular case,
\begin{equation}\label{CoordTranformation}
	u^*(\alpha,\beta) := u\left(\frac{\alpha + \beta}{2}, \frac{\alpha - \beta}{2} \right) = u(t,x) \textrm{ and } u(t,x) = u^*(t+x,t-x),
\end{equation} %
the problem \eqref{introSGWE} can be re-written as
\begin{align}\label{SGWEab}
& \Diamond u^* = \mathcal{N}(u^*) + \sigma(u^*) \varXi_{\alpha\beta},
\end{align}
where $\varXi_{\alpha\beta} :=\frac{\partial^2\Xi}{\partial \alpha\partial \beta}$,  subject to the following boundary conditions
\begin{align}\label{SGWEabCons}
&  u^*(\alpha,-\alpha) = u_0(\alpha), \;\;\;  \partial_{\alpha}u^*(\alpha,-\alpha) + \partial_{\beta}u^*(\alpha,-\alpha) = u_1(\alpha).
\end{align}
Here $\Xi$  is a fractional Brownian sheet (fBs) on $\mathbb{R}^2$ with Hurst indices $H_1, H_2 \in (0,1)$, i.e.  $\Xi$ is a centered Gaussian process such that
\begin{equation}\label{fBs}
\mathbb{E}\big[\Xi(\alpha_1, \beta_1) \Xi(\alpha_2, \beta_2)\bigr] = R_{H_1}(\vert \alpha_1 \vert,\vert \alpha_2 \vert) R_{H_2}(\vert \beta_1 \vert,\vert \beta_2 \vert), \quad  (\alpha_1,\beta_1), (\alpha_2,\beta_2) \in \mathbb{R}^2, \nonumber
\end{equation}
where  $R_H (a,b) = \frac{1}{2} \left(a^{2H} + b^{2H} - \vert a-b \vert^{2H} \right)$,   $ a,b \in \mathbb{R}$ and
\begin{align*}
& \Diamond u^* :=  4 \frac{\partial^2 u^*}{\partial \alpha \partial \beta}, \;\;\;  \mathcal{N}(u^*) :=  4 \sum_{a,b=1}^{n} \Gamma_{ab}(u^*) \frac{\partial u^{*a}}{\partial \alpha}  \frac{\partial u^{*b}}{\partial \beta} . \nonumber
\end{align*}

From now on we will  only work in the $(\alpha,\beta)$-coordinates  and hence,  we will write $u$ instead of $u^*$ in the sequel. As usual in the SPDE theory, we understand the SGWE \eqref{SGWEab} in the following integral/mild form
\begin{align}\label{eqn-SGWEIntegralForm}
u = S(u_0,u_1) + \Diamond^{-1} \mathcal{N}(u) + \Diamond^{-1}\bigl[ \sigma(u) \varXi_{\alpha\beta} \bigr],
\end{align}
where, for $(\alpha,\beta)\in \mathbb{R}^2$,  \begin{equation}\label{SGWEIntegralFormHomgeneous}
[S(u_0,u_1)](\alpha,\beta) :=  \frac{1}{2} \left[ u_0(\alpha) + u_0(-\beta) \right] + \frac{1}{2} \int_{-\beta}^{\alpha} u_1(r) \,dr;
\end{equation}
\begin{equation}\label{SGWEIntegralFormInHomgeneous}
\left[ \Diamond^{-1} \mathcal{N}(u) \right](\alpha,\beta) := \frac{1}{4} \int_{-\beta}^{\alpha} \int_{-a}^{\beta} \mathcal{N}(u(a,b)) \,db\,da ;
\end{equation}
and
\begin{equation}\label{SGWEIntegralFormNoise}
\left[ \Diamond^{-1} (\sigma(u) \varXi_{\alpha\beta}) \right](\alpha,\beta) := \frac{1}{4}\int_{-\beta}^{\alpha} \int_{-a}^{\beta} \sigma(u(a,b))~ \varXi_{\alpha\beta}(da,db).
\end{equation}
The integral on the right hand side  of  \eqref{SGWEIntegralFormNoise} is well-defined pathwise, see   Prop. \ref{StocIntWelDef}.

\section{Relevant notation and function spaces}
If $x$ and $y$ are two quantities (typically non-negative), we will write $x \lesssim y$ or $y \gtrsim x$ to denote the statement that $x \leq Cy$ for some positive constant $C > 0$.

By $L^p(\mathbb{R}^d)$, for $p \in [1,\infty)$, we denote the classical real Banach space of all (equivalence classes of) $\mathbb{R}$-valued $p$-integrable functions on $\mathbb{R}^d$.
For $s \in \mathbb{R}$, we set \begin{equation}\nonumber
H^s(\mathbb{R}^d) = \{f \in \mathcal{S}^\prime(\mathbb{R}^d) : \|f\|_{H^s(\mathbb{R}^d)} := \int_{\mathbb{R}^d} \langle \xi \rangle^{2s} ~| [\mathcal{F}(f)](\xi)|^2 \, d\xi < \infty  \},
\end{equation}
where $\mathcal{S}^\prime(\mathbb{R}^d)$ is the set of all tempered distributions on $\mathbb{R}^d$, i.e. the dual of    the Schwartz space $\mathcal{S}(\mathbb{R}^d)$ of all rapidly decreasing infinitely differentiable functions on $\mathbb{R}^d$,  and  $\langle \xi \rangle: = (1+\vert \xi \vert^2)^{1/2}$, $\xi \in \mathbb{R}^d$ and $\mathcal{F}(f)$ is the $d$-dimensional Fourier transform of $f$.

\begin{Definition}\label{Hsdelta}
	Let $s,\delta \in \mathbb{R}$. The hyperbolic $H^{s,\delta}$ and the product $H_t^sH_x^\delta $  Sobolev spaces are
 the sets of all $u \in \mathcal{S}^\prime(\mathbb{R}^2)$ for which, the appropriate norm is finite, where, with  $\mathcal{F}(u)$	being the space-time Fourier transform of $u  \in \mathcal{S}^\prime(\mathbb{R}^2)$,
	\begin{eqnarray*}
	\Vert u \Vert_{H^{s,\delta}} &: =&  \left( \int_{\mathbb{R}^2}  \langle \vert \tau \vert + \vert \xi\vert \rangle^{2s} \langle \vert \tau \vert - \vert \xi\vert \rangle^{2\delta}  \vert [\mathcal{F}(u)](\tau,\xi)\vert^2  \, d\xi \, d\tau \right)^{1/2},
\\	
\Vert u \Vert_{H_t^s H_x^\delta} &:= & \left(\int_{\mathbb{R}^2} \langle \tau \rangle^{2s}  \langle \xi \rangle^{2\delta} \vert [\mathcal{F}(u)](\tau,\xi) \vert^2 \, d\tau \, d \xi \right)^{1/2}.
\end{eqnarray*}

\end{Definition}
Let $\Phi(\mathbb{R}^d)$ be the set of all systems $\varphi=\{ \varphi_j \}_{j=0}^\infty \subset \mathcal{S}(\mathbb{R}^d)$ such that
\begin{enumerate}
	\item  $\textrm{supp}~\varphi_0 \subset \{ x: \vert x \vert \leq 2 \} $,
		$\textrm{supp}~\varphi_j \subset \{ x: 2^{j-1} \leq \vert x \vert \leq 2^{j+1} \}, ~ \textrm{ if } j \in \mathbb{N}\setminus\{0\}$.\\
	\item For every multi-index $\alpha$ there exists a positive number $C_\alpha$ such that \begin{equation}\nonumber
		2^{j \vert \alpha \vert} D^\alpha \varphi_j(x) \leq c_\alpha \textrm{ for all } j \in \mathbb{N}   \textrm{ and all } x \in \mathbb{R}^d.
	\end{equation}
	\item  $\sum\limits_{j=0}^{\infty} \varphi_j(x)=1 \textrm{ for every } x \in \mathbb{R}^d.$
\end{enumerate}

It is known, see  \cite[Remark 2.3.1/1]{Triebel1983B}, that the system $\Phi(\mathbb{R}^d)$ is not empty. Given a dyadic partition of unity $\varphi := \{ \varphi_j \}_{j=0}^\infty \in \Phi(\mathbb{R})$ and  a tempered distribution  $f \in \mathcal{S}^\prime(\mathbb{R}^2)$, the Littlewood-Paley blocks of $f$ are defined as $\Delta_{j,k} f:= 0$,    $j,k \leq -1$, and
\begin{align}\nonumber
& \Delta_{j,k}f := \mathcal{F}^{-1} (\varphi_j(\tau) \varphi_k(\xi)[\mathcal{F}(f)](\tau,\xi)), ~~  j,k \geq 0,
\end{align}
where $\mathcal{F}^{-1}$ stands for the inverse Fourier transform on $\mathcal{S}^\prime(\mathbb{R}^2)$.
Next,  for $(s_1,s_2) \in \mathbb{R}^2$, $p,q \in (1,\infty)$, we define the following Banach space \begin{equation}\nonumber
	S_{p,q}^{s_1,s_2}B(\mathbb{R}^2) = \{ f \in \mathcal{S}^\prime(\mathbb{R}^2): \Vert f \Vert_{S_{p,q}^{s_1,s_2}B(\mathbb{R}^2)}^\varphi  < \infty  \},
\end{equation} where
\begin{equation}\nonumber
\Vert f \Vert_{S_{p,q}^{s_1,s_2}B(\mathbb{R}^2)}^\varphi := \left(\sum\limits_{k=0}^{\infty}\sum\limits_{j=0}^{\infty}2^{q(s_1j+s_2k)} \Vert \Delta_{j,k}f \Vert_{L^p(\mathbb{R}^2)}^q \right)^{1/q}.
\end{equation}
One can prove that the space  $S_{p,q}^{s_1,s_2}B(\mathbb{R}^2)$ does not depend on the chosen system $\varphi \in \Phi(\mathbb{R})$, see  \cite[Proposition 2.3.2/1]{Triebel1983B}, and the norms are pairwise equivalent.
It is known that $S_{2,2}^{s,\delta}B(\mathbb{R}^2) = H_t^sH_x^\delta (\mathbb{R}^2)$, for $s,\delta \in \mathbb{R}$, with   equivalent norms.

The next proposition justifies the coordinate transformation \eqref{CoordTranformation} from the computation perspective, since in $(\alpha,\beta)$-coordinate the knowledge of product Sobolev spaces is enough to have the local theory.
\begin{Proposition}\label{prop-HsDeltaPdtSobIsom}
	If  $s \geq \delta \in \mathbb{R}$, then the  map 
\begin{equation}\label{eqn-3.1}
		H^{s,\delta} \ni u(t,x) \mapsto u^*(\alpha,\beta) \in H_\alpha^s H_\beta^\delta \cap H_\beta^s H_\alpha^\delta =: \mathbb{H}^{s,\delta},
	\end{equation}
	is an isomorphism,   where as usual the space $\mathbb{H}^{s,\delta}$ is equipped with the norm
	\begin{equation}\nonumber
		\Vert u^* \Vert_{\mathbb{H}^{s,\delta}} := \sqrt{ \Vert u^* \Vert_{H_\alpha^s H_\beta^\delta}^2 + \Vert u^* \Vert_{H_\beta^s H_\alpha^\delta}^2}.
	\end{equation}
	In particular, we have \begin{equation}\nonumber
		\Vert u^* \Vert_{\mathbb{H}^{s,\delta}} \lesssim \Vert u \Vert_{H^{s,\delta} } \lesssim \Vert u^* \Vert_{\mathbb{H}^{s,\delta}}.
	\end{equation}
\end{Proposition}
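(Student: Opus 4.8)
The plan is to reduce the asserted isomorphism to a pointwise comparison of Fourier multipliers. The coordinate change $(\alpha,\beta)\mapsto\big(\tfrac{\alpha+\beta}{2},\tfrac{\alpha-\beta}{2}\big)$ from \eqref{CoordTranformation} is an invertible linear map, hence it induces a bijection of $\mathcal S^\prime(\mathbb R^2)$ onto itself; consequently it is enough to prove the two-sided bound $\|u^*\|_{\mathbb H^{s,\delta}}\lesssim\|u\|_{H^{s,\delta}}\lesssim\|u^*\|_{\mathbb H^{s,\delta}}$, from which the isomorphism together with its stated operator bounds follows at once (since then $\|u\|_{H^{s,\delta}}<\infty$ iff $\|u^*\|_{\mathbb H^{s,\delta}}<\infty$).

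First I would record the relation between the space--time Fourier transforms of $u$ and $u^*$. Writing $(\lambda,\mu)$ for the variables dual to $(\alpha,\beta)$ and performing the substitution $t=\tfrac{\alpha+\beta}{2}$, $x=\tfrac{\alpha-\beta}{2}$ in the defining integral gives $[\mathcal F(u^*)](\lambda,\mu)=2\,[\mathcal F(u)](\lambda+\mu,\lambda-\mu)$, so that the dual variables are linked by $\tau=\lambda+\mu$, $\xi=\lambda-\mu$, i.e.\ $\lambda=\tfrac{\tau+\xi}{2}$, $\mu=\tfrac{\tau-\xi}{2}$. Substituting this relation and changing variables back to $(\tau,\xi)$ (the Jacobian and the factor $2$ contribute only harmless multiplicative constants), the definitions of $H_\alpha^sH_\beta^\delta$ and $H_\beta^sH_\alpha^\delta$ give, after summation, $\|u^*\|_{\mathbb H^{s,\delta}}^2\approx\int_{\mathbb R^2}m_*(\tau,\xi)\,|[\mathcal F(u)](\tau,\xi)|^2\,d\tau\,d\xi$ with the multiplier $m_*(\tau,\xi):=\langle\tfrac{\tau+\xi}{2}\rangle^{2s}\langle\tfrac{\tau-\xi}{2}\rangle^{2\delta}+\langle\tfrac{\tau-\xi}{2}\rangle^{2s}\langle\tfrac{\tau+\xi}{2}\rangle^{2\delta}$, whereas by Definition~\ref{Hsdelta} the quantity $\|u\|_{H^{s,\delta}}^2$ is the same integral with multiplier $m(\tau,\xi):=\langle|\tau|+|\xi|\rangle^{2s}\langle|\tau|-|\xi|\rangle^{2\delta}$. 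The whole statement thus reduces to the uniform equivalence $m_*\approx m$ on $\mathbb R^2$.

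For this pointwise comparison I would put $a=\tau+\xi$ and $b=\tau-\xi$ and use the elementary identities $|\tau|+|\xi|=\max(|a|,|b|)$ and $\big||\tau|-|\xi|\big|=\min(|a|,|b|)$, together with the trivial equivalence $\langle c/2\rangle\approx\langle c\rangle$. Since both $m$ and $m_*$ are symmetric under $a\leftrightarrow b$ and even in each of $a,b$, I may assume $|a|\ge|b|$; then $m\approx\langle a\rangle^{2s}\langle b\rangle^{2\delta}$ and $m_*\approx\langle a\rangle^{2s}\langle b\rangle^{2\delta}\big(1+(\langle b\rangle/\langle a\rangle)^{2(s-\delta)}\big)$.

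This last display is precisely where the hypothesis $s\ge\delta$ enters: because $\langle b\rangle\le\langle a\rangle$, the correction factor $1+(\langle b\rangle/\langle a\rangle)^{2(s-\delta)}$ stays in $[1,2]$, so $m\le m_*\lesssim m$; were $s<\delta$ instead, this factor would blow up as $|a|/|b|\to\infty$ and no such equivalence could hold. I expect this pointwise estimate to be the main — and essentially the only nontrivial — step, the reduction and the change-of-variables bookkeeping that precede it being routine.
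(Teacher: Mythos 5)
Your argument is correct and complete: the reduction to the pointwise multiplier comparison via $[\mathcal F(u^*)](\lambda,\mu)=2[\mathcal F(u)](\lambda+\mu,\lambda-\mu)$, the identities $|\tau|+|\xi|=\max(|\tau+\xi|,|\tau-\xi|)$ and $\bigl||\tau|-|\xi|\bigr|=\min(|\tau+\xi|,|\tau-\xi|)$, and the observation that $s\ge\delta$ keeps the correction factor bounded are exactly what is needed. The paper states this proposition without any proof, and your multiplier-equivalence argument is the standard one that the authors evidently have in mind (it is the usual way the null-coordinate form of $H^{s,\delta}$ is justified, cf.\ the references to Keel--Tao), so there is nothing to flag.
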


With the spaces $\mathbb{H}^{H_1,H_2}$  defined above introduced  in the previous Proposition \ref{prop-HsDeltaPdtSobIsom}, we  have the following result.
\begin{Proposition}\label{prop-NoiseWelDef}
	Assume that  $H_1,H_2 \in \bigl(0, 1 \bigr)$ and  $H_i^\prime \in (0, H_1 \wedge H_2)$, $i=1,2$.
	Then  there exists a complete filtered probability space $(\Omega,\mathfrak{F},\mathbb{P})$ and a map,
	\[ \varXi: \mathbb{R}_+^2 \times \Omega \to \mathbb{R}, \]
	such that $\mathbb{P}$-a.s.  $\varXi(\cdot,\cdot,\omega) \in \mathbb{H}^{H_1^\prime,H_2^\prime}$ locally, i.e. for every bump function $\eta$,
	\begin{equation}\nonumber
	\eta(\alpha) \eta(\beta) \varXi(\alpha,\beta,\omega) \in \mathbb{H}^{H_1^\prime,H_2^\prime}.
	\end{equation}
	Moreover, for  $(\alpha_1,\beta_1), (\alpha_2,\beta_2) \in \mathbb{R}^2$,
	\begin{equation}\nonumber
	\mathbb{E}\left[\varXi(\alpha_1, \beta_1) ~ \varXi(\alpha_2, \beta_2)\right] = R_{H_1}(\vert \alpha_1 \vert,\vert \alpha_2 \vert) ~ R_{H_2}(\vert \beta_1 \vert,\vert \beta_2 \vert).
	\end{equation}
	Here $\mathbb{E}$ is the Expectation operator w.r.t. $\mathbb{P}$.	
\end{Proposition}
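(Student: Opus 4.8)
\emph{Proof proposal.} The plan is to first construct $\varXi$ as a centered Gaussian field with the prescribed covariance and then to extract its local regularity from a single second-moment computation. For the existence, observe that the kernel
\[
K\bigl((\alpha_1,\beta_1),(\alpha_2,\beta_2)\bigr):=R_{H_1}(|\alpha_1|,|\alpha_2|)\,R_{H_2}(|\beta_1|,|\beta_2|)
\]
is symmetric and positive semi-definite: for each $i$ the kernel $(a,b)\mapsto R_{H_i}(|a|,|b|)$ is the covariance of a fractional Brownian motion of Hurst index $H_i\in(0,1)$, hence positive semi-definite, and a tensor product of positive semi-definite kernels acting on independent pairs of variables is again positive semi-definite. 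By the Kolmogorov existence theorem there is a centered Gaussian field with covariance $K$ on some probability space, which (using $|\alpha|$, $|\beta|$ in the kernel) may be regarded as defined on all of $\mathbb{R}^2$; I would then take $(\Omega,\mathfrak{F},\mathbb{P})$ to be its completion, equipped with the usual augmentation of the natural filtration, which is complete by construction. The last displayed identity of the Proposition is then immediate from the definition of $K$. Equivalently one may write down the harmonizable white-noise representation of $\varXi$, which gives a concrete version and is convenient for the spectral estimates below.

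It remains to establish the local regularity. Fix a bump function $\eta$ and set $g:=\eta(\alpha)\eta(\beta)\varXi$. Since $\mathbb{H}^{H_1',H_2'}$ is a separable Hilbert space and $g$ is a centered Gaussian random element of it, it suffices to show $\mathbb{E}\|g\|^2_{\mathbb{H}^{H_1',H_2'}}<\infty$; almost sure membership (indeed finiteness of every moment, by Fernique's theorem) then follows. Because the norms in Definition \ref{Hsdelta} are expressed through $|\mathcal{F}(\cdot)|^2$ with the separated weight $\langle\tau\rangle^{2s}\langle\xi\rangle^{2\delta}$, and because $K$ factorizes as a product over the $\alpha$- and the $\beta$-variables, Tonelli's theorem lets me factorize the expected squared norm:
\[
\mathbb{E}\|g\|^2_{H_\alpha^s H_\beta^\delta}=\Bigl(\int_{\mathbb{R}}\langle\tau\rangle^{2s}\,A_{H_1}(\tau)\,d\tau\Bigr)\Bigl(\int_{\mathbb{R}}\langle\xi\rangle^{2\delta}\,A_{H_2}(\xi)\,d\xi\Bigr),
\]
where $A_H(\tau):=\mathbb{E}\bigl|\widehat{\eta\,\varXi^{(H)}}(\tau)\bigr|^2$ is the localized spectral second moment of a one-dimensional fractional Brownian motion of Hurst index $H$, and an identical factorization holds for $\|g\|^2_{H_\beta^s H_\alpha^\delta}$ with the roles of the two variables interchanged.

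The heart of the matter, and the step I expect to be the main obstacle, is the one-dimensional decay estimate
\[
A_H(\tau)\lesssim \langle\tau\rangle^{-(2H+1)},\qquad \tau\in\mathbb{R},
\]
which encodes the classical fact that a fractional Brownian motion of index $H$ lies in $H^s_{loc}$ precisely for $s<H$. I would prove it from the harmonizable representation $\varXi^{(H)}(\alpha)=c_H\int_{\mathbb{R}}\frac{e^{i\alpha\lambda}-1}{|\lambda|^{H+1/2}}\,W(d\lambda)$, which after the Itô isometry yields $A_H(\tau)=c_H^2\int_{\mathbb{R}}|\widehat{\eta}(\tau-\lambda)-\widehat{\eta}(\tau)|^2\,|\lambda|^{-(2H+1)}\,d\lambda$. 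The required care lies in splitting the $\lambda$-integral near $\lambda=0$, near $\lambda\approx\tau$, and in the remaining range: the low-frequency singularity $|\lambda|^{-(2H+1)}$ is tamed by the Lipschitz bound $|\widehat{\eta}(\tau-\lambda)-\widehat{\eta}(\tau)|^2\lesssim|\lambda|^2$ (integrable for $H<1$), while the algebraic decay in $\tau$ comes from the region $\lambda\approx\tau$, where $\widehat\eta$ being Schwartz confines the mass and produces the factor $|\tau|^{-(2H+1)}$. Granting this, $\int_{\mathbb{R}}\langle\tau\rangle^{2s}A_H(\tau)\,d\tau\lesssim\int_{\mathbb{R}}\langle\tau\rangle^{2s-2H-1}\,d\tau$ converges if and only if $s<H$. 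Applying this to each of the four one-dimensional integrals produced by the two factorizations shows that $\mathbb{E}\|g\|^2_{\mathbb{H}^{H_1',H_2'}}$ is finite exactly when $H_1'<H_1\wedge H_2$ and $H_2'<H_1\wedge H_2$, the minimum arising because the intersection space forces each of $H_1',H_2'$ to be admissible for both $H_1$ and $H_2$. This is precisely the hypothesis $H_i'\in(0,H_1\wedge H_2)$, and the proof is complete.
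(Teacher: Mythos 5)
Your proposal is correct in substance but takes a genuinely different route from the paper, which in fact writes no proof at all: it only indicates that the Proposition follows from Proposition \ref{prop-HsDeltaPdtSobIsom} combined with the tensor Gaussian measure theorem of Carmona--Chevet \cite{CC1979} and the known Besov/Sobolev regularity of one-dimensional fractional Brownian motion from Feyel--de La Pradelle \cite{FLP1999}. In that intended argument the tensorization of one-dimensional regularity is delegated to an abstract result on Gaussian measures in tensor products of Banach spaces, and the $s<H$ regularity of fBm is quoted; you instead exploit that the relevant spaces are Hilbert ($p=q=2$), so that by Tonelli the expected squared $H^s_\alpha H^\delta_\beta$-norm literally factorizes once the covariance is a product kernel, and you prove the one-dimensional spectral bound $A_H(\tau)\lesssim\langle\tau\rangle^{-(2H+1)}$ by hand from the harmonizable representation. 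This buys a self-contained, elementary proof at the cost of being tied to $L^2$-based spaces, which is all that is needed here; your accounting of the four one-dimensional integrals coming from the intersection $H^{H_1'}_\alpha H^{H_2'}_\beta\cap H^{H_1'}_\beta H^{H_2'}_\alpha$, and the resulting appearance of $H_1\wedge H_2$, is exactly right. Two small points deserve care. First, the covariance in the statement is $R_{H_1}(|\alpha_1|,|\alpha_2|)\,R_{H_2}(|\beta_1|,|\beta_2|)$, so the field on all of $\mathbb{R}^2$ is the even reflection of a sheet on $\mathbb{R}_+^2$, whereas your harmonizable representation produces the two-sided fBm whose covariance contains $|a_1-a_2|^{2H}$ rather than $\bigl||a_1|-|a_2|\bigr|^{2H}$; the two processes differ off the positive quadrant. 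This is harmless because the even extension operator is bounded on $H^s(\mathbb{R})$ for $s<3/2$ (applied in each variable separately), but you should either say so or estimate the reflected kernel directly. Second, only the upper bound on $A_H$ is needed, so the ``if and only if'' claim about convergence is unnecessary and would require a matching lower bound you do not supply. Neither point affects the validity of the conclusion.
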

The above result, somehow related to a result proved in \cite{QT2007}, can be proved by using Proposition \ref{prop-HsDeltaPdtSobIsom}  and a combination of results from \cite{CC1979} and \cite{FLP1999}.

\section{The main result: the local well-posedness}
Let us fix $s \geq \delta \in \left( \frac{3}{4},1 \right)$ for  the whole present  section.
To  solve the SGWE problem \eqref{eqn-SGWEIntegralForm} locally,  which is sufficient to prove the local-wellposedness result we are aiming, let $\eta, \chi \in \mathcal{C}_{0}^\infty(\mathbb{R};[0,1])$ be even  cut-off functions   such that $\supp\,\eta=\supp\,\chi \subset [-4,4]$ and $[-2,2] \subset \eta^{-1}(\{1\})=\chi^{-1}(\{1\})$.
Let us put  $\eta_T(x) := \eta(x/T)$, $x\in \mathbb{R}$,  for any $T>0$. Similarly, we define  $\chi_T$.

To simplify the exposition, without loss of generality, we restrict ourselves to the target manifold of dimension $2$ and which can be covered by a family of charts such that the Christoffel symbols  $\Gamma_{ab}^k$ depend polynomially on $u$, that is, for every $k=1,2$,  one can find $r \in \mathbb{N}$ and $A_{ab}^l \in \mathbb{R}^2$ such that
$\Gamma_{ab}^k(u) = \sum_{\vert l \vert  \leq r} A_{ab}^l u^l ,\;\; u=(u^1,u^2) \in \mathbb{R}^2$,
where,  for  $l = (l_1,l_2) \in \mathbb{N}^2$,
$u^l=[u^1]^{l_1} [u^2 ]^{l_2}$.

Our  first result in  this section is a generalization of \cite[Lemma 2.2]{PT2016}.
\begin{Proposition}\label{StocIntWelDef}
		Assume that $s, \delta \in \left( \frac{3}{4},1\right)$, such that $s \geq \delta$, and  $f \in \mathbb{H}^{s-1,\delta-1}$ are given.  Then $F: \mathbb{R}^2 \to \mathbb{R}^2$ defined by $F:= H + I + J + G$ where, for $\alpha,\beta \in \mathbb{R}$,
		\begin{equation}\nonumber
			H(\alpha,\beta) := \int_{-\beta}^{\alpha} \int_{-\gamma}^{\beta} (\Delta_{0,0}f)(\gamma,\tau)\, d\tau\, d\gamma,
		\end{equation}
		\begin{align}
		I(\alpha,\beta) & := \int_{-\beta}^{\alpha} \sum\limits_{n = 1}^\infty \mathcal{F}^{-1} \left[ \frac{1}{i\xi}\varphi_0(\tau) \varphi_n(\xi) (\mathcal{F}f)(\tau,\xi) \right](\gamma,\beta) \, d\gamma, \nonumber\\
		&  \quad -  \int_{-\beta}^{\alpha} \sum\limits_{n = 1}^\infty \mathcal{F}^{-1} \left[ \frac{1}{i\xi}\varphi_0(\tau) \varphi_n(\xi) (\mathcal{F}f)(\tau,\xi) \right](\gamma,-\gamma) \, d\gamma, \nonumber
		\end{align}
		\begin{align}
		J (\alpha,\beta) & := \int_{-\alpha}^{\beta} \sum\limits_{m = 1}^\infty \mathcal{F}^{-1} \left[ \frac{1}{i\tau}\varphi_0(\xi) \varphi_m(\tau) (\mathcal{F}f)(\tau,\xi) \right](\alpha,\gamma) \, d\gamma \nonumber\\
		& \quad - \int_{-\alpha}^{\beta} \sum\limits_{m = 1}^\infty \mathcal{F}^{-1} \left[ \frac{1}{i\tau}\varphi_0(\xi) \varphi_m(\tau) (\mathcal{F}f)(\tau,\xi) \right](-\gamma,\gamma) \, d\gamma, \nonumber
		\end{align}
		and
		\begin{align}
		G(\alpha,\beta) & := \sum_{j,k=1}^{\infty}  \left[ \mathcal{F}^{-1} [ \frac{1}{(i \tau)(i\xi)} \varphi_j(\tau) \varphi_k(\xi)  (\mathcal{F} \phi)(\tau,\xi)  ]\right](\alpha,\beta) \nonumber\\
		& \quad - \frac{1}{2} \sum_{j,k=1}^{\infty}  \left[ \mathcal{F}^{-1} [ \frac{1}{(i \tau)(i\xi)} \varphi_j(\tau) \varphi_k(\xi)  (\mathcal{F} \phi)(\tau,\xi)  ]\right](\alpha,-\alpha) \nonumber\\
		& \quad - \frac{1}{2} \sum_{j,k=1}^{\infty}  \left[ \mathcal{F}^{-1} [ \frac{1}{(i \tau)(i\xi)} \varphi_j(\tau) \varphi_k(\xi)  (\mathcal{F} \phi)(\tau,\xi)  ]\right](-\beta,\beta) \nonumber\\
		& \quad - \frac{1}{2} \int_{-\beta}^{\alpha} \sum_{j,k=1}^{\infty}  \left[ \mathcal{F}^{-1} [ \frac{1}{(i\xi)} \varphi_j(\tau) \varphi_k(\xi)  (\mathcal{F} \phi)(\tau,\xi)  ]\right](\gamma,-\gamma) \, d\gamma \nonumber\\
		& \quad - \frac{1}{2} \int_{-\beta}^{\alpha} \sum_{j,k=1}^{\infty}  \left[ \mathcal{F}^{-1} [ \frac{1}{(i\tau)} \varphi_j(\tau) \varphi_k(\xi)  (\mathcal{F} \phi)(\tau,\xi)  ]\right](\gamma,-\gamma) \, d \gamma, \nonumber
		\end{align}
		 is the unique tempered distribution such that $\frac{\partial^2 F}{\partial\alpha \partial \beta} =f$, and satisfy the following homogeneous boundary conditions
		 \begin{align}
		 F(\alpha,-\alpha) =0 \quad \textrm{ and } \quad \frac{\partial F}{\partial\alpha } (\alpha,-\alpha) + \frac{\partial F}{\partial\beta } (\alpha,-\alpha)  =0. \nonumber
		 \end{align}
		 Moreover, for every $\eta,\chi$ and $T >0$, there exists $C(\eta,\chi,T) >0$ such that
		\begin{equation}\nonumber
		\Vert \eta_T(\alpha) \chi_T(\beta) F(\alpha,\beta) \Vert_{\mathbb{H}^{s,\delta}} \leq C(\eta,\chi, T)~  \Vert f \Vert_{\mathbb{H}^{s-1,\delta-1}}.
		\end{equation}
		We will use the following notation
		\begin{equation}\nonumber
		F(\alpha,\beta) =: \int_{-\beta}^{\alpha} \int_{-a}^{\beta} f(da,db), \;\; (\alpha,\beta) \in \mathbb{R}^2.
		\end{equation}
\end{Proposition}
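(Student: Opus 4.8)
The plan is to read the proposition as a statement about the operator $\Diamond^{-1}$ with zero Cauchy data on the characteristic line $\{\beta=-\alpha\}$, and to establish in order: (i) that $F$ solves $\partial_\alpha\partial_\beta F=f$; (ii) the two boundary conditions; (iii) uniqueness; and (iv) the quantitative bound. The decomposition $F=H+I+J+G$ is dictated by splitting the Littlewood--Paley lattice $\{(j,k)\}$ into the four regions $\{(0,0)\}$, $\{0\}\times\mathbb{N}_{\geq1}$, $\mathbb{N}_{\geq1}\times\{0\}$ and $\mathbb{N}_{\geq1}^2$; on each region exactly one of the frequency variables $\tau,\xi$ is bounded away from $0$, or both are, and this governs whether one inverts $\partial_\alpha\partial_\beta$ by a genuine Fourier division by $\tfrac{1}{(i\tau)(i\xi)}$, by a single division combined with one integration in the remaining variable, or by a plain double primitive. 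For (i) I would differentiate each piece under the integral sign: for $G$ the identity $\partial_\alpha\partial_\beta\,\mathcal{F}^{-1}[\tfrac{1}{(i\tau)(i\xi)}\varphi_j\varphi_k\mathcal{F}f]=\mathcal{F}^{-1}[\varphi_j\varphi_k\mathcal{F}f]$ is immediate since $\varphi_j,\varphi_k$ ($j,k\geq1$) vanish near the origin, and the diagonal correction terms are killed by the mixed derivative; for $H,I,J$ the outer integrals $\int_{-\beta}^{\alpha}(\cdots)\,d\gamma$ contribute only endpoint values under $\partial_\alpha\partial_\beta$, recovering the corresponding frequency block of $f$. Summing the four blocks reproduces $f$ through the partition of unity.

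For (ii) I would prove the stronger fact that $\partial_\alpha F$ and $\partial_\beta F$ both vanish on $\{\beta=-\alpha\}$, which yields $F(\alpha,-\alpha)=0$ and $(\partial_\alpha+\partial_\beta)F(\alpha,-\alpha)=0$ simultaneously. Each outer integral $\int_{-\beta}^{\alpha}(\cdots)\,d\gamma$ collapses to $0$ when $\beta=-\alpha$, and the subtracted diagonal terms are arranged so that the endpoint contributions cancel in pairs. Writing $\psi:=\sum_{j,k\geq1}\mathcal{F}^{-1}[\tfrac{1}{(i\tau)(i\xi)}\varphi_j\varphi_k\mathcal{F}f]$, a short computation gives $\partial_\alpha G=\partial_\alpha\psi(\alpha,\beta)-(\partial_\alpha\psi)(\alpha,-\alpha)$ and $\partial_\beta G=\partial_\beta\psi(\alpha,\beta)-(\partial_\beta\psi)(-\beta,\beta)$, both vanishing on the diagonal; the analogous cancellations for $H,I,J$ are direct. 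Uniqueness (iii) follows by linearity: a difference $W$ of two solutions satisfies $\partial_\alpha\partial_\beta W=0$, so $W(\alpha,\beta)=p(\alpha)+q(\beta)$; the condition $W(\alpha,-\alpha)=0$ forces $q(\beta)=-p(-\beta)$, and then $(\partial_\alpha+\partial_\beta)W(\alpha,-\alpha)=2p'(\alpha)=0$ gives $p'\equiv0$, hence $W\equiv0$, the only care being to run this argument inside $\mathcal{S}'(\mathbb{R}^2)$.

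The substance is the bound (iv), which I would prove piece by piece, first noting that multiplication by the smooth compactly supported cutoff $\eta_T(\alpha)\chi_T(\beta)$ is bounded on $\mathbb{H}^{s,\delta}$ and on each factor $H^\sigma(\mathbb{R})$ for the orders $\sigma\in\{s,\delta,s-1,\delta-1\}\subset(-\tfrac14,1)$ in play. Three tools carry the argument. First, a \emph{gain of one derivative in a spectrally separated variable}: on $\supp\,\varphi_k$ with $k\geq1$ one has $|\xi|\sim 2^{k}$, so the multiplier $\tfrac{1}{i\xi}$ contributes a factor $2^{-k}$ and maps the $H^{\delta-1}_\beta$-block to the $H^{\delta}_\beta$-block with uniform constants; applied in both variables and summed through the $S_{2,2}^{s,\delta}B=H^s_\alpha H^\delta_\beta$ characterization this yields $\Vert\psi\Vert_{\mathbb{H}^{s,\delta}}\lesssim\Vert f\Vert_{\mathbb{H}^{s-1,\delta-1}}$ for the principal part $G$. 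Second, a \emph{tensor factorization}: each diagonal correction depends on a single variable, e.g. $w(\alpha):=\psi(\alpha,-\alpha)$, so after cutoff $\eta_T(\alpha)\chi_T(\beta)w(\alpha)=(\eta_Tw)\otimes\chi_T$ and $\Vert(\eta_Tw)\otimes\chi_T\Vert_{H^s_\alpha H^\delta_\beta}=\Vert\eta_Tw\Vert_{H^s}\,\Vert\chi_T\Vert_{H^\delta}$, reducing matters to one-dimensional norms of the localized trace; the cutoff $\chi_T(\beta)$ is indispensable since $w$ is constant in $\beta$ and lies in no $H^\delta_\beta$ space without localization. Third, a \emph{trace estimate onto the diagonal}: because $s,\delta>\tfrac12$, the restriction $\psi\mapsto\psi(\cdot,-\cdot)$ from $\mathbb{H}^{s,\delta}$ into a one-dimensional Sobolev space is bounded, controlling $\Vert\eta_Tw\Vert_{H^s}$ by $\Vert f\Vert_{\mathbb{H}^{s-1,\delta-1}}$. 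The integrated corrections $\int_{-\beta}^{\alpha}(\cdots)(\gamma,-\gamma)\,d\gamma$ are then handled by combining the trace bound with the fact that, after localization, the $\gamma$-integration runs over a bounded interval and defines a bounded operator on the relevant spaces.

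\textbf{Main obstacle.} I expect the genuinely delicate part to be the mixed pieces $I$ and $J$, where only one frequency variable is spectrally separated. There one gains a derivative in the high-frequency variable by division, but the regularity in the low-frequency variable must be produced entirely by the integration $\int_{-\beta}^{\alpha}d\gamma$ together with the cutoff, and the outcome must be estimated simultaneously in \emph{both} orderings $H^s_\alpha H^\delta_\beta$ and $H^s_\beta H^\delta_\alpha$ that constitute $\mathbb{H}^{s,\delta}$. Reconciling the two orderings is exactly where the hypothesis $s\geq\delta$ enters, while keeping the diagonal-trace contributions of $I$ and $J$ under control is where the lower bound $s,\delta>\tfrac34$ (through $s,\delta>\tfrac12$) is used. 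The principal part $G$ and the purely low-frequency block $H$ are comparatively routine once the three tools above are in place.
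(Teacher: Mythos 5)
Your plan is correct and follows essentially the same route the paper indicates: the paper's (one-sentence) proof says precisely that one verifies $G$, $H$, $I$, $J$ are well-defined elements of $\mathbb{H}^{s,\delta}$ using the properties of the spaces $S_{2,2}^{s,\delta}B(\mathbb{R}^2)=H^s_\alpha H^\delta_\beta$, which is exactly what your block-by-block estimates (derivative gain on spectrally separated Littlewood--Paley blocks, tensorized diagonal corrections, trace bounds onto the diagonal) carry out in detail. Since the paper is an announcement and gives no further detail, there is no discrepancy to report.
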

\begin{proof}
Using the properties of $S_{2,2}^{s,\delta}B(\mathbb{R}^2)$ spaces, we need to show that $G,H, I, J$ are well-defined elements of $\mathbb{H}^{s,\delta}$.
\end{proof}

By following the approach of \cite{GT2010} we get the next required result.
\begin{Proposition}\label{DiffusionTerm}
	Assume that  $\sigma \in \mathcal{C}_b^3(\mathbb{R}^2)$.  Then $\sigma \circ u \in \mathbb{H}^{s,\delta}$ for every
 $u \in \mathbb{H}^{s,\delta}$  and there exist constants  $C_i(\sigma):=C_i (\Vert \sigma \Vert_{\mathcal{C}_b^{i+1}})$, $i=1,2$ such that for
 $u,u_1,u_2 \in \mathbb{H}^{s,\delta}$,
\begin{equation}\nonumber
	\Vert \sigma \circ u \Vert_{\mathbb{H}^{s,\delta}}^2 \leq C_1(\sigma)  \Vert u \Vert_{\mathbb{H}^{s,\delta}}^2 \left[ 1 + \Vert u \Vert_{\mathbb{H}^{s,\delta}}^2 \right],
	\end{equation}
\begin{equation}\nonumber
\begin{split}
\Vert \sigma \circ u_1 - \sigma \circ u_2 & \Vert_{\mathbb{H}^{s,\delta}}^2 \leq C_2(\sigma) \Vert u_2-u_1 \Vert_{\mathbb{H}^{s,\delta}}^2 \left[ 1 + \sum_{i,k=1}^{2}    \Vert u_i \Vert_{\mathbb{H}^{s,\delta}}^{2k}  \right]. \nonumber
\end{split}	
\end{equation}
\end{Proposition}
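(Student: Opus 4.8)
The plan is to exploit two structural facts about $\mathbb{H}^{s,\delta}$ that hold because $s\geq\delta\in(3/4,1)$ (in fact only $s,\delta>1/2$ is needed here). First, $\mathbb{H}^{s,\delta}$ is a Banach algebra, i.e. $\Vert vw\Vert_{\mathbb{H}^{s,\delta}}\lesssim\Vert v\Vert_{\mathbb{H}^{s,\delta}}\Vert w\Vert_{\mathbb{H}^{s,\delta}}$; this follows from the algebra property of each factor $H^s_\alpha H^\delta_\beta$ and $H^s_\beta H^\delta_\alpha$, valid for Sobolev spaces of dominating mixed smoothness with each index above $1/2$, using the $S^{s,\delta}_{2,2}B$-description recalled before Proposition \ref{prop-HsDeltaPdtSobIsom} (cf. \cite{GT2010,Triebel1983B}). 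Second, $\mathbb{H}^{s,\delta}\hookrightarrow L^\infty(\mathbb{R}^2)$ together with the one-dimensional embeddings $H^s_\alpha\hookrightarrow L^\infty_\alpha$ and $H^\delta_\beta\hookrightarrow L^\infty_\beta$. Since the asserted bounds force the left-hand side to vanish at $u=0$, I first reduce to $\sigma(0)=0$: the constant $\sigma(0)$ is irrelevant for the regularity seminorms (and is absorbed by the cut-offs $\eta_T,\chi_T$ fixed above), while $|\sigma(u)-\sigma(0)|\leq\Vert\sigma'\Vert_\infty|u|$ shows $\sigma(u)-\sigma(0)\in L^2$.

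For the growth estimate I would use the Gagliardo description of the norm, splitting $\Vert\cdot\Vert_{H^s_\alpha H^\delta_\beta}^2$ into the $L^2$ part, the pure seminorms in $\alpha$ and in $\beta$, and the mixed double-difference seminorm with weight $|h|^{-1-2s}|k|^{-1-2\delta}$ (and symmetrically for $H^s_\beta H^\delta_\alpha$). The $L^2$ part and the two pure seminorms are handled by the Lipschitz bound $|\sigma(u(x))-\sigma(u(y))|\leq\Vert\sigma'\Vert_\infty|u(x)-u(y)|$, so each is $\leq\Vert\sigma'\Vert_\infty^2$ times the corresponding piece of $\Vert u\Vert^2$, producing the linear term $\Vert u\Vert_{\mathbb{H}^{s,\delta}}^2$. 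The mixed seminorm is the crucial one: via the second-order difference identity one gets the pointwise estimate $|D_h^\alpha D_k^\beta(\sigma\circ u)|\lesssim\Vert\sigma'\Vert_\infty|D_h^\alpha D_k^\beta u|+\Vert\sigma''\Vert_\infty|D_h^\alpha u|\,|D_k^\beta u|$, where $D_h^\alpha,D_k^\beta$ denote the finite increments in $\alpha$ and $\beta$. The first term again reproduces $\Vert u\Vert_{\mathbb{H}^{s,\delta}}^2$; for the second, integrating the $h$- and $k$-variables separately yields the product of the pointwise Gagliardo densities $g_\alpha(\alpha,\beta)g_\beta(\alpha,\beta)$, which after Cauchy--Schwarz and the embeddings $H^s_\alpha\hookrightarrow L^\infty_\alpha$, $H^\delta_\beta\hookrightarrow L^\infty_\beta$ closes at order $\Vert u\Vert_{\mathbb{H}^{s,\delta}}^4$. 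Collecting the four pieces and repeating in the reversed ordering gives $\Vert\sigma\circ u\Vert_{\mathbb{H}^{s,\delta}}^2\leq C_1(\Vert\sigma\Vert_{\mathcal{C}_b^2})\Vert u\Vert_{\mathbb{H}^{s,\delta}}^2(1+\Vert u\Vert_{\mathbb{H}^{s,\delta}}^2)$, with the advertised dependence $C_1=C_1(\Vert\sigma\Vert_{\mathcal{C}_b^2})$.

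For the Lipschitz estimate I would write $\sigma\circ u_1-\sigma\circ u_2=(u_1-u_2)\,\rho$ with $\rho:=\int_0^1\sigma'(u_2+t(u_1-u_2))\,dt$, and split $\rho=\sigma'(0)+(\rho-\sigma'(0))$. The constant part contributes $\sigma'(0)(u_1-u_2)$, hence the term $|\sigma'(0)|^2\Vert u_1-u_2\Vert_{\mathbb{H}^{s,\delta}}^2$ (the $1$ in the bracket). For the remainder the algebra property gives $\Vert(u_1-u_2)(\rho-\sigma'(0))\Vert_{\mathbb{H}^{s,\delta}}\lesssim\Vert u_1-u_2\Vert_{\mathbb{H}^{s,\delta}}\Vert\rho-\sigma'(0)\Vert_{\mathbb{H}^{s,\delta}}$; pulling the norm inside the $t$-integral by Minkowski's inequality and applying the growth estimate of the previous paragraph to $\sigma'-\sigma'(0)$ (which vanishes at $0$ and lies in $\mathcal{C}_b^2$, whence $\sigma\in\mathcal{C}_b^3$ and $C_2=C_2(\Vert\sigma\Vert_{\mathcal{C}_b^3})$) bounds $\Vert\rho-\sigma'(0)\Vert_{\mathbb{H}^{s,\delta}}$ by a polynomial in $\Vert u_1\Vert_{\mathbb{H}^{s,\delta}},\Vert u_2\Vert_{\mathbb{H}^{s,\delta}}$ of the degree recorded by $\sum_{i,k=1}^2\Vert u_i\Vert_{\mathbb{H}^{s,\delta}}^{2k}$. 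Squaring and adding the two contributions yields the claim. Alternatively one can run the whole proposition through a Littlewood--Paley/paraproduct decomposition in the $S^{s_1,s_2}_{2,2}B$ scale, the route of \cite{GT2010}; the pointwise second-difference computation above is simply a faster packaging of the same estimates.

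The step I expect to be the genuine obstacle is the mixed double-difference term in the growth estimate: one must check that the quadratic contribution $\Vert\sigma''\Vert_\infty|D_h^\alpha u|\,|D_k^\beta u|$ really closes at the level $\Vert u\Vert_{\mathbb{H}^{s,\delta}}^4$ and no worse. This is exactly where the anisotropic embeddings $H^s_\alpha\hookrightarrow L^\infty_\alpha$ and $H^\delta_\beta\hookrightarrow L^\infty_\beta$ (valid since $s,\delta>1/2$) are used, trading one $\alpha$-increment and one $\beta$-increment against $L^\infty$-control in the complementary variable; the upper constraint $s,\delta<1$ guarantees that the Gagliardo seminorms are the right description and that $\mathcal{C}_b^2$-regularity of $\sigma$ suffices for the first estimate. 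The remaining work---the symmetric bookkeeping for $H^s_\beta H^\delta_\alpha$ and the convergence of the mixed integrals---is routine.
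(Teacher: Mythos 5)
Your proposal is essentially correct, but it cannot be said to follow the paper's proof, because the paper gives none: Proposition \ref{DiffusionTerm} is stated with only the remark that it is obtained ``by following the approach of \cite{GT2010}'', i.e.\ via the Littlewood--Paley/paraproduct machinery in the $S^{s_1,s_2}_{2,2}B$ scale --- precisely the alternative you mention in passing at the end of your third paragraph. What you supply instead is a self-contained difference-quotient argument: the equivalent Gagliardo-type description of the dominating-mixed-smoothness norm (valid exactly because $s,\delta\in(0,1)$), the second-order mixed increment bound for $\sigma\circ u$, the anisotropic $L^\infty$ embeddings to close the quadratic term at order $\Vert u\Vert^4$, and the algebra property plus the integral remainder formula for the Lipschitz estimate. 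This route is more elementary and makes the dependence $C_i=C_i(\Vert\sigma\Vert_{\mathcal{C}_b^{i+1}})$ transparent, at the cost of being tied to the regime $s,\delta<1$; the paraproduct route of \cite{GT2010} is heavier but extends beyond it. Both are legitimate, and your observation that the statement implicitly requires $\sigma(0)=0$ (otherwise $\sigma\circ 0$ is a nonzero constant, which does not lie in $\mathbb{H}^{s,\delta}$) is a genuine point about the proposition as written, not a defect of your argument.

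Two small repairs are needed. First, your pointwise estimate for the mixed second difference is incomplete: writing the increment as a double integral of $\partial_s\partial_t\,\sigma\bigl(a+s(b-a)+t(c-a)+st\,D_h^\alpha D_k^\beta u\bigr)$ one obtains, besides $\Vert\sigma'\Vert_\infty|D_h^\alpha D_k^\beta u|$ and $\Vert\sigma''\Vert_\infty|D_h^\alpha u|\,|D_k^\beta u|$, cross terms of the form $\Vert\sigma''\Vert_\infty|D_h^\alpha D_k^\beta u|\bigl(|D_h^\alpha u|+|D_k^\beta u|+|D_h^\alpha D_k^\beta u|\bigr)$; these are absorbed by bounding the single increments by $\Vert u\Vert_{L^\infty}\lesssim\Vert u\Vert_{\mathbb{H}^{s,\delta}}$, so they only feed the quartic term, but they should not be omitted. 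Second, the ``absorbed by the cut-offs $\eta_T,\chi_T$'' remark is not available inside this proposition, which has no localization; the honest statement is that the estimates hold as written only under the normalization $\sigma(0)=0$ (harmless in the application, where $\sigma$ enters through localized integrals). With these adjustments your argument is complete.
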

We now state and provide a sketch of proof of the main result of this note. Below we fix a realisation of the random field belonging to the space $\mathbb{H}^{s,\delta}$, see Prop. \ref{prop-NoiseWelDef}.
\begin{Theorem}\label{thm-SLWP}
	Assume $s,\delta \in \left( \frac{3}{4},1 \right)$ such that $\delta \leq s$  and $(u_0,u_1) \in H^s(\mathbb{R}) \times H^{s-1}(\mathbb{R})$. Let $\varXi$ be a fractional Brownian sheet with Hurst indices $H_1, H_2 \in (s,1)$.
	There exist a $R_0 \in (0,1)$ and a $ \lambda_0 := \lambda_0(\| u_0\|_{H^s}, \| u_1\|_{H^{s-1}}, R_0)  \gg 1$  such that for every $\lambda \geq \lambda_0$ there exists a unique $u := u(\lambda,R_0) \in \mathbb{B}_{R_0}$, where $\mathbb{B}_R := \left\{u \in \mathbb{H}^{s,\delta}: \| u\|_{\mathbb{H}^{s,\delta}} \leq R \right\},$ which satisfies the following integral equation
	\begin{align}
	u(\alpha,\beta)  = \eta(\lambda \alpha)  \eta(\lambda \beta) & \left( [S( \chi(\lambda) u_0  ,\chi(\lambda) u_1)](\alpha,\beta)  + [\Diamond^{-1} \mathcal{N}(u)](\alpha,\beta)  \right. \nonumber\\
	& \quad \qquad\left. + [\Diamond^{-1} \sigma(u) \varXi_{\alpha\beta}](\alpha,\beta) \right), \qquad (\alpha,\beta) \in \mathbb{R}^2. \nonumber
	\end{align}
	Here the right hand side terms are, respectively, defined in  \eqref{SGWEIntegralFormHomgeneous}, \eqref{SGWEIntegralFormInHomgeneous} and \eqref{SGWEIntegralFormNoise}.

\end{Theorem}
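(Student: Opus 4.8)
The plan is to realize the solution pathwise, for the fixed realisation $\varXi \in \mathbb{H}^{s,\delta}$, as the unique fixed point in $\mathbb{B}_{R_0}$ of the map $\mathcal{T}$ defined by the right-hand side of the integral equation, and to invoke the Banach fixed point theorem. Explicitly, for $u \in \mathbb{H}^{s,\delta}$ I set
\[
(\mathcal{T}u)(\alpha,\beta) := \eta(\lambda\alpha)\eta(\lambda\beta)\Bigl( [S(\chi(\lambda)u_0,\chi(\lambda)u_1)](\alpha,\beta) + [\Diamond^{-1}\mathcal{N}(u)](\alpha,\beta) + [\Diamond^{-1}(\sigma(u)\varXi_{\alpha\beta})](\alpha,\beta)\Bigr).
\]
The cut-off $\eta(\lambda\,\cdot)=\eta_{1/\lambda}(\cdot)$ localizes everything to the square of side $\sim 1/\lambda$, so that Proposition~\ref{StocIntWelDef}, read with $T=1/\lambda$, provides a constant $C(\eta,\chi,1/\lambda)$ which degenerates as $\lambda\to\infty$; this is the engine that will supply the smallness needed to close both the self-mapping and the contraction. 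Since $(\mathbb{H}^{s,\delta},\|\cdot\|_{\mathbb{H}^{s,\delta}})$ is complete, it suffices to show that, for a suitable fixed $R_0\in(0,1)$ and all $\lambda\geq\lambda_0$, the map $\mathcal{T}$ preserves $\mathbb{B}_{R_0}$ and is a strict contraction there.

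For the self-mapping property I would estimate the three terms separately. The free term is controlled directly from the explicit formula \eqref{SGWEIntegralFormHomgeneous} together with the isomorphism of Proposition~\ref{prop-HsDeltaPdtSobIsom} and one-dimensional trace/Sobolev bounds, the localized data $\chi(\lambda\,\cdot)u_0,\chi(\lambda\,\cdot)u_1$ yielding a bound in terms of $\|u_0\|_{H^s}+\|u_1\|_{H^{s-1}}$. For the nonlinear term I write $\mathcal{N}(u)=4\sum_{a,b}\Gamma_{ab}(u)\,\partial_\alpha u^a\,\partial_\beta u^b$: a bilinear estimate of the form $\|(\partial_\alpha v)(\partial_\beta w)\|_{\mathbb{H}^{s-1,\delta-1}}\lesssim\|v\|_{\mathbb{H}^{s,\delta}}\|w\|_{\mathbb{H}^{s,\delta}}$ handles the quadratic derivative factor, while the polynomial dependence of the Christoffel symbols on $u$ is absorbed by a Moser/composition estimate of the type recorded in Proposition~\ref{DiffusionTerm}, giving $\|\mathcal{N}(u)\|_{\mathbb{H}^{s-1,\delta-1}}\lesssim P(\|u\|_{\mathbb{H}^{s,\delta}})$ for a polynomial $P$; Proposition~\ref{StocIntWelDef} then bounds $\|\eta(\lambda\,\cdot)\eta(\lambda\,\cdot)\Diamond^{-1}\mathcal{N}(u)\|_{\mathbb{H}^{s,\delta}}$ by $C(\lambda)\,P(\|u\|_{\mathbb{H}^{s,\delta}})$. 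For the stochastic term, Proposition~\ref{DiffusionTerm} gives $\sigma\circ u\in\mathbb{H}^{s,\delta}$ with the stated quantitative control; since the chosen realisation of $\varXi$ lies locally in $\mathbb{H}^{H_1',H_2'}$ with $H_i'\in(s,H_1\wedge H_2)$ by Proposition~\ref{prop-NoiseWelDef}, a product estimate pairing the $\mathbb{H}^{s,\delta}$-regularity of $\sigma(u)$ against the (one-derivative-better) regularity of $\varXi_{\alpha\beta}$ places $\sigma(u)\varXi_{\alpha\beta}$ in $\mathbb{H}^{s-1,\delta-1}$, and Proposition~\ref{StocIntWelDef} again converts this into a bound carrying the factor $C(\lambda)$. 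Summing the three contributions and using $R_0<1$ so that the super-linear terms do not dominate, I first fix $R_0$ and then choose $\lambda_0=\lambda_0(\|u_0\|_{H^s},\|u_1\|_{H^{s-1}},R_0)$ so large that the total stays $\leq R_0$.

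For the contraction, given $u_1,u_2\in\mathbb{B}_{R_0}$ the free term cancels, so only the Duhamel terms contribute. I estimate $\mathcal{N}(u_1)-\mathcal{N}(u_2)$ by telescoping and exploiting the bilinearity of the derivative factor together with the composition estimate for $\Gamma_{ab}$, and $\sigma(u_1)\varXi_{\alpha\beta}-\sigma(u_2)\varXi_{\alpha\beta}$ by the Lipschitz/difference bound of Proposition~\ref{DiffusionTerm}; both differences are linear in $u_1-u_2$ with coefficients polynomial in $R_0$. Applying Proposition~\ref{StocIntWelDef} once more yields $\|\mathcal{T}u_1-\mathcal{T}u_2\|_{\mathbb{H}^{s,\delta}}\leq C(\lambda)\,Q(R_0)\,\|u_1-u_2\|_{\mathbb{H}^{s,\delta}}$ for a polynomial $Q$, and enlarging $\lambda_0$ if necessary makes the prefactor strictly less than $1$. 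The two properties together furnish the unique fixed point $u=u(\lambda,R_0)\in\mathbb{B}_{R_0}$.

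The main obstacle is the anisotropic bilinear estimate $\|(\partial_\alpha v)(\partial_\beta w)\|_{\mathbb{H}^{s-1,\delta-1}}\lesssim\|v\|_{\mathbb{H}^{s,\delta}}\|w\|_{\mathbb{H}^{s,\delta}}$, tailored to the mixed null-coordinate derivative structure $\partial_\alpha u\,\partial_\beta u$ of the wave-maps nonlinearity, together with the accompanying composition estimate for the polynomial Christoffel symbols. It is precisely here that the restriction $s\in(3/4,1)$ (with $s\geq\delta$) is forced: the available one-dimensional Sobolev embeddings, applied through a Littlewood--Paley paraproduct decomposition in the $S^{s_1,s_2}_{2,2}B(\mathbb{R}^2)$ scale, only deliver the required multiplication law above this threshold, and the pairing of $\sigma(u)$ with $\varXi_{\alpha\beta}$ needs the matching gap $H_i'>s$. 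Establishing these product and Moser estimates, and verifying that the localized operator norm $C(\lambda)$ genuinely degenerates as $\lambda\to\infty$, is the technically demanding part; the remainder is the standard contraction scheme.
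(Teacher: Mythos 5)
Your overall scheme --- a Banach fixed point argument in $\mathbb{H}^{s,\delta}$, with the three terms controlled via Propositions~\ref{StocIntWelDef} and \ref{DiffusionTerm}, the product estimate $\mathbb{H}^{s,\delta}\times\mathbb{H}^{s-1,\delta-1}\to\mathbb{H}^{s-1,\delta-1}$, and bilinear/composition estimates for $\mathcal{N}$ --- matches Steps 1--3 of the paper. The gap is in where the smallness comes from. You propose to read Proposition~\ref{StocIntWelDef} at scale $T=1/\lambda$ and assert that the constant $C(\eta,\chi,1/\lambda)$ ``degenerates'' as $\lambda\to\infty$; the proposition asserts no such decay, and in these $X^{s,b}$-type product Sobolev spaces, with exponents this close to the critical thresholds, a gain from shrinking the support is precisely the kind of statement that must be proved rather than assumed. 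Worse, even granting such a gain, your self-mapping step fails at the free term: localization alone does not make $S(u_0,u_1)$ small. If $u_0(0)\neq 0$, then $\eta(\lambda\alpha)u_0(\alpha)$ behaves like $u_0(0)\,\eta(\lambda\alpha)$, whose $H^s$ norm grows like $\lambda^{s-1/2}$ for $s>1/2$; so for general (large) data you cannot place $\mathcal{T}u$ into a ball of radius $R_0<1$, no matter how large $\lambda$ is.

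The paper's Steps 4--5 supply the missing idea: instead of shrinking the cut-off, one rescales the problem, working with $u_0^\lambda(\alpha)=\chi(\alpha)\bigl[u_0(\alpha/\lambda)-\bar{u}_0^{\lambda}\bigr]$, $u_1^\lambda(\alpha)=\chi(\alpha)\lambda^{-1}u_1(\alpha/\lambda)$ and $\varXi^\lambda_{\alpha\beta}=\lambda^{-2}\Pi_\lambda\varXi_{\alpha\beta}$. The subtraction of the weighted mean $\bar{u}_0^{\lambda}$ (later absorbed by passing to a translated coordinate chart on the target manifold) removes exactly the obstruction described above; the condition $s>1/2$ makes the rescaled inhomogeneous data small; and the noise contribution acquires the explicit factor $\lambda^{1-(s+\delta)}\to 0$, which is where the hypothesis $s+\delta>3/2$ enters. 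The nonlinear term is then tamed by taking $R_0$ small (its contraction constant is of the form $C_{\mathcal{N}}R_0^{\gamma}$ with $\gamma\geq 2$, not by any power of $\lambda$), and the fixed point $u^\lambda$ of the rescaled map $\Theta^\lambda$ is transported back via $u(\alpha,\beta)=u^\lambda(\lambda\alpha,\lambda\beta)$ to solve the stated equation. Without this rescaling and mean-subtraction step your argument does not close.
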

\begin{proof}[\textbf{Sketch of proof of Theorem \ref{thm-SLWP}}]
	Our proof  is based on the Banach Fixed Point Theorem in the space $\mathbb{H}^{s,\delta}$. Note that all the constants below are positive and depend on $\eta$ unless mentioned otherwise.
	
	\textbf{Step 1:} Using the following well-known result, see e.g.  \cite{G1996},
 \begin{equation}\nonumber
		\bigg\Vert \bigl\{ x \mapsto  \chi(x) \int_{0}^{x}f(y) \, dy \bigr\}\bigg\Vert_{H^s} \lesssim \Vert f \Vert_{H^{s-1}},
	\end{equation}
	we can estimate the localized homogeneous part of the solution  as
	\begin{align}\nonumber
		\Vert \eta(\alpha) \chi(\beta) S(u_0,u_1) \Vert_{\mathbb{H}^{s,\delta}} \leq C_S\left( \Vert u_0 \Vert_{H^s} + \Vert u_1 \Vert_{H^{s-1}}\right).
	\end{align}

	\textbf{Step 2:} In view of the polynomiality of the Christoffel symbols, by using Prop. \ref{StocIntWelDef},  we deduce the existence of a natural number $\gamma \geq 2$ such that
	\begin{equation}\nonumber
		\Vert \eta(\alpha) \chi(\beta) ~ \Diamond^{-1}(\mathcal{N}(\phi) - \mathcal{N}(\psi)) \Vert_{\mathbb{H}^{s,\delta}} \leq C_{\mathcal{N}} \Vert \phi-\psi \Vert_{\mathbb{H}^{s,\delta}} \left[ \Vert \phi \Vert_{\mathbb{H}^{s,\delta}} + \Vert \psi \Vert_{\mathbb{H}^{s,\delta}}\right]^{\gamma}.
	\end{equation}
	
	\textbf{Step 3:} By  Propositions \ref{StocIntWelDef} and \ref{DiffusionTerm} followed by the continuity of the multiplication map
	$$\mathbb{H}^{s,\delta} \times  \mathbb{H}^{s-1,\delta-1} \to  \mathbb{H}^{s-1,\delta-1},$$
	see e.g. \cite{KT1998},  we get
	\begin{align}
	\Vert \eta(\alpha) \eta(\beta) &  \Diamond^{-1} [(\sigma (u_1) - \sigma (u_2)) \varXi_{\alpha\beta}] \Vert_{\mathbb{H}^{s,\delta}}^2 	\nonumber\\		
	& \quad \quad \leq C_{\varXi} ~ C_2(\sigma) ~ \Vert u_2-u_1 \Vert_{\mathbb{H}^{s,\delta}} \left[ 1 + \sum_{i,k=1}^{2}    \Vert u_i \Vert_{\mathbb{H}^{s,\delta}}^{k}  \right] ~ \Vert \varXi_{\alpha\beta} \Vert_{\mathbb{H}^{s-1,\delta-1}}, \nonumber
	\end{align}
	for some positive constants $C_{\varXi}$ and $C_2(\sigma):=C_2 (\Vert \sigma \Vert_{\mathcal{C}_b^{3}})$.

	\textbf{Step 4:} We consider a map $\Theta^\lambda: \mathbb{H}^{s,\delta} \ni u \mapsto u_{\Theta^\lambda} \in \mathbb{H}^{s,\delta}$ defined by
	\begin{align}
	u_{\Theta^\lambda} = \eta(\alpha) \eta(\beta) \left( S( u_0^\lambda, u_1^\lambda ) + \Diamond^{-1} [\mathcal{N}(u)]+ \Diamond^{-1}[ \sigma(u) \varXi_{\alpha\beta}^\lambda]  \right),	\nonumber
	\end{align}
	where
	\begin{align}
	u_0^{\lambda}(\alpha) := \chi(\alpha) \left[ u_0\left( \frac{\alpha}{\lambda}\right) - \bar{u}_0^{\lambda} \right] , \,\,	u_1^\lambda(\alpha) := \chi(\alpha)  \lambda^{-1} u_1\left( \frac{\alpha}{\lambda}\right), \,\, \textrm{ and }\,\, \varXi_{\alpha\beta}^\lambda := \lambda^{-2} \Pi_{\lambda} \varXi_{\alpha\beta}, \nonumber
	\end{align}
	with
	\begin{equation}\nonumber
	\bar{u}_0^{\lambda} := \int_{\mathbb{R}} u_0 \left( \frac{y}{\lambda}\right) \psi(y)\,dy .
	\end{equation}
	Here $\psi$ is any bump function which is non zero on the support of $\chi, \eta$ and $\int_{\mathbb{R}}\psi(x)\, dx =1$.
	Then, in view of the assumption that $s+ \delta > \frac{3}{2}$, by using the above estimates we obtain, for any $u,v \in \mathbb{B}_R$,
	\begin{align}
	\| u_{\Theta^\lambda} - v_{\Theta^\lambda}\|_{\mathbb{H}^{s,\delta}} & \lesssim   \left[ C_{\mathcal{N}}    R  + \lambda^{1-(s+\delta)} C_\varXi ~ C_2(\sigma) ~ \left( 1 + R \right) ~ \Vert \varXi_{\alpha\beta} \Vert_{\mathbb{H}^{s-1,\delta-1}}  \right] ~ \Vert u - v \Vert_{\mathbb{H}^{s,\delta}}. \nonumber
	\end{align}
	Hence we can choose $R_0 \in (0,1), \lambda_0: = \lambda_0(\| u_0\|_{H^s}, \| u_1\|_{H^{s-1}}, R_0)$ in such a way that $\Theta^\lambda$ is $\frac{1}{2}$-contraction as a map from  $\mathbb{B}_{R_0}$ into itself and, then by the Banach Fixed Point Theorem there exists a unique $u^\lambda \in \mathbb{B}_{R_0}$ such that $u^\lambda = \Theta^\lambda(u^\lambda)$.

	\textbf{Step 5:} By working with another suitable translated coordinate chart on $\mathcal{N}$ (which will remove the dependence on $\bar{u}_0^{\lambda}$) and by defining the inverse scaling $u(\alpha,\beta) :=  u^\lambda(\lambda \alpha,\lambda\beta)$ for the  fixed point
	$u^\lambda$ from Step 4, we deduce that
	\begin{align}
	u(\alpha,\beta)  & = \Theta^\lambda\left( u^\lambda\right) (\lambda \alpha,\lambda \beta)  \nonumber \\
	& = \eta(\lambda \alpha)  \eta(\lambda \beta) \left( [S( \chi(\lambda) u_0  , \chi(\lambda)u_1)] + [\Diamond^{-1} \mathcal{N}(u)]  + [\Diamond^{-1} \sigma(u) \varXi_{\alpha\beta}] \right). \nonumber
	\end{align}
	Hence we conclude the proof of Theorem \ref{thm-SLWP}.		
\end{proof}
We complete our study of local theory with the following Theorem.
\begin{Theorem}\label{thm-SLWP2}
	Under the above mentioned assumptions, there exist a open set $\mathcal{O}$, containing the diagonal $\mathcal{D} := \{ (\alpha,-\alpha): \alpha \in \mathbb{R} \}$, and  a function  $u: \mathcal{O} \to \mathbb{R}^2$ such that for every $(\alpha_0,-\alpha_0) \in \mathcal{D}$, there exists $r>0$ such that $\restr{u}{B_r((\alpha_0,-\alpha_0))} \in \mathbb{H}^{s,\delta}$, where $B_r((\alpha,-\alpha))$ is open ball of radius $r$ around $(\alpha,-\alpha)$, and $u$ solves \eqref{SGWEab}-\eqref{SGWEabCons} uniquely in $\mathcal{O}$.
\end{Theorem}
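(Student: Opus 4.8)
The plan is to globalize the local solution produced by Theorem \ref{thm-SLWP} via a patching argument along the diagonal $\mathcal{D}$, using the translation invariance of the problem and the uniqueness already established. The key observation is that Theorem \ref{thm-SLWP} gives a fixed-point solution in a neighbourhood of the origin after rescaling, but the entire construction (the null-coordinate transformation \eqref{CoordTranformation}, the boundary conditions \eqref{SGWEabCons}, the cut-offs $\eta,\chi$, and the estimates in Propositions \ref{StocIntWelDef} and \ref{DiffusionTerm}) is invariant under shifting the base point along the diagonal, since the fractional Brownian sheet $\varXi$ has stationary-type increments and the spaces $\mathbb{H}^{s,\delta}$ are translation invariant. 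Thus for each fixed base point $(\alpha_0,-\alpha_0)\in\mathcal{D}$ I would re-run Theorem \ref{thm-SLWP}, but with the initial data prescribed on the shifted characteristic line through $(\alpha_0,-\alpha_0)$ and with cut-off functions centred at $\alpha_0$, to obtain a local solution $u_{\alpha_0}$ that lies in $\mathbb{H}^{s,\delta}$ when restricted to a ball $B_{r(\alpha_0)}((\alpha_0,-\alpha_0))$ for some radius $r(\alpha_0)>0$.

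Next I would verify that these locally defined solutions agree on overlaps, which is what allows them to be glued into a single function $u$ on the union $\mathcal{O} := \bigcup_{\alpha_0\in\mathbb{R}} B_{r(\alpha_0)}((\alpha_0,-\alpha_0))$, an open set containing $\mathcal{D}$. On an overlap $B_{r(\alpha_0)}((\alpha_0,-\alpha_0)) \cap B_{r(\alpha_1)}((\alpha_1,-\alpha_1))$ both $u_{\alpha_0}$ and $u_{\alpha_1}$ satisfy the same integral equation \eqref{eqn-SGWEIntegralForm} with the same boundary data along the diagonal, so their difference solves a homogeneous problem; invoking the contraction property from Step 4 of Theorem \ref{thm-SLWP} on a possibly smaller common ball forces $u_{\alpha_0} = u_{\alpha_1}$ there. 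Concretely I would use finite propagation speed in the null coordinates: the value of the mild solution at $(\alpha,\beta)$ depends only on the data in the backward characteristic rectangle with corners on $\mathcal{D}$, so two solutions with coinciding diagonal data must coincide on the overlapping rectangle. This consistency makes the glued function $u:\mathcal{O}\to\mathbb{R}^2$ well-defined, and by construction $\restr{u}{B_r((\alpha_0,-\alpha_0))}\in\mathbb{H}^{s,\delta}$ for a suitable $r$ near each diagonal point.

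For uniqueness in $\mathcal{O}$, I would argue by contradiction: if $\tilde u$ were another solution of \eqref{SGWEab}--\eqref{SGWEabCons} on $\mathcal{O}$, then on each small ball it would satisfy the same fixed-point equation as $u_{\alpha_0}$, and a continuity/connectedness argument along $\mathcal{D}$ (the set where $u=\tilde u$ is both open, by local uniqueness from the contraction, and closed, by continuity of both functions in $\mathbb{H}^{s,\delta}$ locally) shows that $\tilde u$ and $u$ coincide throughout $\mathcal{O}$.

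The main obstacle I expect is the overlap/consistency step: although translation invariance delivers local solutions at every diagonal point, one must ensure that the radii $r(\alpha_0)$ can be chosen so that the resulting balls genuinely cover a neighbourhood of all of $\mathcal{D}$ and that the local uniqueness constant from the Banach fixed point theorem is uniform enough along $\mathcal{D}$ to force agreement on overlaps. The subtlety is that $R_0$ and $\lambda_0$ in Theorem \ref{thm-SLWP} depend on the norms $\|u_0\|_{H^s}, \|u_1\|_{H^{s-1}}$ of the localized data, which in turn depend on the base point through the shifted cut-offs; controlling this dependence uniformly (or at least locally uniformly) along $\mathcal{D}$, together with matching the noise term $\varXi$ consistently across shifted charts, is the technical heart of the gluing and is where the careful choice of the auxiliary bump function $\psi$ and the translated charts from Step 5 of Theorem \ref{thm-SLWP} will be essential.
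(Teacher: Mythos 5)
Your proposal follows essentially the same route as the paper: for each diagonal point, apply a translated version of Theorem \ref{thm-SLWP} to obtain a local solution $u_{\alpha_0}$ in a neighbourhood of $(\alpha_0,-\alpha_0)$, then glue these via the local uniqueness. The paper gives only this brief sketch, so your additional discussion of overlap consistency and the locally uniform choice of radii is a reasonable elaboration of the same argument rather than a different approach.
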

To prove Theorem \ref{thm-SLWP2}, for each fixed point $(\alpha_0,-\alpha_0) \in \mathcal{D}$, by Theorem \ref{thm-SLWP}  we  find a unique solution $u_{\alpha_0}$ of  a translated version of  the  problem \eqref{eqn-SGWEIntegralForm} defined in some neighbourhood $N_{\alpha_0}$ of $(\alpha_0,-\alpha_0)$. By using the uniqueness  we can glue ``local" solutions to get a solution $u$ as in the assertion.\\

\textbf{Acknowledgements:} We would like to thank Prof. Kenji Nakanishi for many helpful conversations via email related to linear estimates from his paper \cite{MNT2010} and to provide the reference \cite{G1996}. The second author also wishes to thank   the Department of Mathematics, University of York, and the ORS, for  financial support and excellent research facilities.

\end{document}